\newtheorem{thm}{Theorem}[section]
\newtheorem{cor}[thm]{Corollary}
\newtheorem{lem}[thm]{Lemma}
\newtheorem{prop}[thm]{Proposition}
\theoremstyle{definition}
\newtheorem{defn}[thm]{Definition}
\theoremstyle{remark}
\numberwithin{equation}{section}
\title[Well-posedness and scattering for the Boltzmann equations]
{Well-posedness and scattering for the Boltzmann equations: Soft potential with cut-off}
\author{Lingbing He}
\address{Department of Mathematical Sciences, TsingHua University, Beijing, 100084, P.R.China}
\email{lbhe@math.tsinghua.edu.cn}
\author{Jin-Cheng Jiang}
\address{Department of Mathematics, National Tsing Hua University, Hsinchu, Taiwan 30013, R.O.C}
\email{jcjiang@math.nthu.edu.tw}
\begin{document}

\begin{abstract}
We prove the global existence of the  unique mild solution for the Cauchy problem 
of the cut-off Boltzmann equation for soft potential model $\gamma=2-N$ 
with initial data  small in $L^N_{x,v}$ where $N=2,3$ is the dimension. 
The proof relies on the existing inhomogeneous 
Strichartz estimates for the kinetic equation by Ovcharov~\cite{Ovc11} and 
convolution-like estimates for the gain term of the Boltzmann collision operator by 
Alonso, Carneiro and Gamba~\cite{ACG10}. The global dynamics of the solution is also 
characterized by showing that the small global solution scatters 
with respect to the kinetic transport operator in $L^N_{x,v}$. 
Also the connection between function spaces and  cut-off 
soft potential model $-N<\gamma<2-N$ is
characterized  in the local well-posedness result for the Cauchy problem
 with large initial data.

\end{abstract}

\maketitle

\section{Introduction and Results}

With the first appearance of 
Strichartz estimates for the kinetic equation in the note of Castella and 
Perthame~\cite{CP96}, the Strichartz estimates have been  applied 
to proving the existence of global weak solution with small initial data assumption for
the kinetic equation,
Bournaveas et al.~\cite{BCGP08} for a nonlinear kinetic system modeling chemotaxis
and Ars\'{e}nio~\cite{Ars11} for the cut-off Boltzmann equation. We note that the result of 
 Ars\'{e}nio holds only for non-conventional collision kernel 
 whose kinetic part is $L^p$ integrable for some $p$ depending on 
dimension and the weak solution  is not unique.

We accomplish this approach to some extend for the case of the Boltzmann equation
by proving the global existence of the  unique mild solution for the Cauchy problem 
of  the cut-off Boltzmann equation for soft potential model $\gamma=2-N$ with initial data  small in $L^N_{x,v}$ where $N=2,3$ is the dimension. The proof relies on the existing inhomogeneous 
Strichartz estimates for the kinetic equation by Ovcharov~\cite{Ovc11} and 
convolution-like estimates for the gain term of the Boltzmann collision operator by 
Alonso, Carneiro and Gamba~\cite{ACG10}. 
The global dynamics of the solution is also 
characterized by showing that small global solution scatters 
with respect to the kinetic transport operator in $L^N_{x,v}$.
Also the connection between function spaces and  cut-off 
soft potential model $-N<\gamma<2-N$ is
characterized  in the local well-posedness result for the Cauchy problem
 with large initial data.

To state the results precisely, we begin with the introduction of  the necessary
notations.  We consider the Cauchy problem for the Boltzmann equation 
\begin{equation}\label{E:Cauchy}
\left\{
\begin{aligned}
&{\partial_t f}+v\cdot\nabla_x f =Q(f,f)\\
&f(0,x,v)=f_0(x,v)
\end{aligned}
\right.
\end{equation}
in $(0,\infty)\times\mathbb{R}^N\times\mathbb{R}^N, N=2,3,$ 
where the collision operator  
\[
Q(f,f)(v)=\int_{\mathbb{R}^N}\int_{\omega\in S^{N-1}}(f'f'_{*}-ff_{*})B(v-v_{*},\omega)d\Omega(\omega)dv_{*},
\]
and $d\Omega(\omega)$ is the solid element in the direction of unit vector $\omega$.
Here we have used the abbreviations $f'=f(x,v',t)\;,\;f'_{*}=f(x,v'_{*},t)\;,\;f_{*}=f(x,v_{*},t)$, 
where the relation between the pre-collisional velocities of particles and
after collision is given by
\[
v'=v-[\omega\cdot(v-v_{*})]\omega\;,\;v'_{*}=v_{*}+[\omega\cdot(v-v_{*})]\omega\;,\;\omega\in S^{N-1}.
\]
The  cut-off soft potential collision kernel takes the from 
\begin{equation}\label{D:kernel}
B(v-v_*,\omega)=|v-v_*|^{\gamma}b(\cos\theta),\;0\leq\theta\leq \pi/2
\end{equation}
where 
\[
-N<\gamma<0\;,\;\cos\theta=\frac{(v-v_*)\cdot\omega}{|v-v_*|}
\]
and the angular function $b$ satisfies the Grad's cut-off assumption 
\begin{equation}\label{D:Grad}
\int_{S^{N-1}} b(\cos\theta)d\Omega(\omega)<\infty.
\end{equation}
When $\gamma=0$, ~\eqref{D:kernel} is called the Maxwell molecules. 
For our purpose, we introduce the mixed Lebesgue norm 
\[
\|f(t,x,v)\|_{L^q_tL^r_xL^p_v}
\] where the notation  $L^q_tL^r_xL^p_v$ stands for the space $L^q(\mathbb{R};L^r
(\mathbb{R}^N;L^p(\mathbb{R}^N)))$ and it is understood that we are using 
$L^q_t(\mathbb{R})=L^q_t([0,\infty))$ for the well-posedness problem which can 
be done by imposing support restriction to the inhomogeneous  Strichartz estimates. 
We use $L^a_{x,v}$ to denote $L_x^a(\mathbb{R}^N;L_v^a(\mathbb{R}^N))$.

We also need to define the meaning of the solution scatters with
respect to kinetic transport operator in our result. It seems strange to 
mention the notion of the scattering of the solution of the Boltzmann equation since it involves 
Boltzmann's H-theorem (see for example~\cite{BGGL16} for more discussion). 
From the mathematical point of view, the solution scatters implies  that the hyperbolic part of 
the equation dominates the solution all the time and the definition of scattering thus 
help us to understand the  large time behavior of solution, see also remark 4 after Theorem~\ref{T:1}
and Corollary~\ref{Cor1}.       
Here we  say that a global solution $f\in C([0,\infty),L^a_{x,v})$ scatters in $L^a_{x,v}$ as 
$t\rightarrow\infty$ if there exits $f_+\in L^a_{x,v}$ such that 
\begin{equation}
\|f(t)-U(t)f_+\|_{L^a_{x,v}}\rightarrow 0
\end{equation}
where $U(t)f(x,v)=f(x-vt,v)$ is the solution map of the kinetic transport equation 
\[
\partial_t f+v\cdot\nabla_x f=0. 
\]
We note that the operator $U(t)$ is time reversible and thus the scattering problem is still well-defined if we consider 
$t\rightarrow -\infty$ or $t$ goes from $-\infty$ to $\infty$. Since the results for these scattering problems are similar, we only 
present the case $t\rightarrow\infty$.    

The main result of this paper is the following.
\begin{thm}\label{T:1} 
Let $N=2$ or $3$ and $B$ defined in~\eqref{D:kernel} satisfies~\eqref{D:Grad} and $\gamma=2-N$. 
The Cauchy problem~\eqref{E:Cauchy} is 
globally wellposed in $L^N_{x,v}$ when the initial data is small enough. 
More specially, there exists $R>0$ small enough  such that for all $f_0$ in the ball $B_{R}=\{f_0\in L^N_{x,v} (\mathbb{R}^N\times
\mathbb{R}^N):\|f_0\|_{L^N_{x,v}}<R\}$ there exists a globally unique  mild solution 
\[
f\in C([0,\infty),L^N_{x,v})\cap
L^q([0,\infty],L^r_xL^p_v) 
\]
where the triple $(q,r,p)$ lies in the set
\begin{equation}
\{ (q,r,p) | \;\frac{1}{q}=\frac{N}{p}-1\;,\;\frac{1}{r}=\frac{2}{N}-\frac{1}{p}\;,\;
\frac{1}{N}<\frac{1}{p}<\frac{N+1}{N^2}\}. 
\end{equation} 
The solution map $f_0\in B_{R}\subset L^N_{x,v}\rightarrow f\in L^q_tL^r_xL^p_v$ 
is Lipschitz continuous and the solution $f$ scatters with respect to the kinetic 
transport operator in $L^N_{x,v}$.
\end{thm}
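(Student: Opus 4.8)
The plan is to set up a standard fixed-point/contraction argument for the mild (Duhamel) formulation
\[
f(t) = U(t)f_0 + \int_0^t U(t-s)\,Q(f,f)(s)\,ds,
\]
in the function space $X = C([0,\infty);L^N_{x,v})\cap L^q_tL^r_xL^p_v$ for a fixed admissible triple $(q,r,p)$, equipped with the norm $\|f\|_X = \sup_t\|f(t)\|_{L^N_{x,v}} + \|f\|_{L^q_tL^r_xL^p_v}$. The free term is controlled by the conservation $\|U(t)f_0\|_{L^N_{x,v}} = \|f_0\|_{L^N_{x,v}}$ together with the homogeneous Strichartz estimate of Ovcharov, which bounds $\|U(t)f_0\|_{L^q_tL^r_xL^p_v}$ by $\|f_0\|_{L^N_{x,v}}$ precisely when $(q,r,p)$ satisfies the stated scaling relations $\tfrac1q = \tfrac Np - 1$, $\tfrac1r = \tfrac2N - \tfrac1p$, and the admissibility range $\tfrac1N < \tfrac1p < \tfrac{N+1}{N^2}$. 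For the Duhamel term I would apply the inhomogeneous Strichartz estimate (again Ovcharov, restricted to $t\ge 0$ via support truncation) to reduce matters to estimating $\|Q(f,f)\|$ in the dual/source space; this is where the structure of $\gamma = 2-N$ is essential, because the collision kernel $|v-v_*|^{2-N}$ is exactly the Riesz-potential power for which the Alonso--Carneiro--Gamba convolution-type estimate gives $\|Q^+(f,g)\|_{\text{(appropriate mixed norm)}}\lesssim \|f\|\,\|g\|$ with the exponents matching the Strichartz source space. The loss term $Q^-(f,g) = (f\ast_v \text{something})\,g$ is handled by the same ACG-type estimate (or directly by Young/Hardy--Littlewood--Sobolev in $v$, uniformly in $x,t$), and Grad's cut-off $\int_{S^{N-1}} b\,d\Omega <\infty$ ensures the angular integration costs only a constant.

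\textbf{Next}, combining these pieces yields the two key nonlinear estimates: $\|\int_0^t U(t-s)Q(f,f)\,ds\|_X \lesssim \|f\|_X^2$ and the corresponding bilinear difference estimate $\|\Phi(f)-\Phi(g)\|_X \lesssim (\|f\|_X+\|g\|_X)\|f-g\|_X$, where $\Phi$ is the Duhamel map. Standard contraction-mapping then gives, for $R$ small enough, a unique fixed point in the ball $\{\|f\|_X \le 2R\}\subset X$ whenever $\|f_0\|_{L^N_{x,v}} < R$, and the bilinear estimate simultaneously gives Lipschitz dependence of $f_0\mapsto f$ from $B_R$ into $L^q_tL^r_xL^p_v$. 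That $f\in C([0,\infty);L^N_{x,v})$ follows from continuity of $t\mapsto U(t)f_0$ in $L^N_{x,v}$ plus continuity of the Duhamel integral, which is a consequence of the same Strichartz bounds applied on shrinking time intervals (dominated convergence).

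\textbf{For scattering}, I would show that $g(t) := U(-t)f(t)$ is Cauchy in $L^N_{x,v}$ as $t\to\infty$. Indeed $U(-t)f(t) - U(-s)f(s) = \int_s^t U(-\tau)Q(f,f)(\tau)\,d\tau$, and applying the inhomogeneous Strichartz estimate on the time window $[s,\infty)$ together with the nonlinear bound shows this quantity is controlled by $\|Q(f,f)\|$ on $[s,\infty)$, which tends to $0$ as $s\to\infty$ because $f\in L^q_tL^r_xL^p_v([0,\infty))$ forces the tail $\|f\|_{L^q_tL^r_xL^p_v([s,\infty))}\to 0$. Hence $f_+ := \lim_{t\to\infty} U(-t)f(t)$ exists in $L^N_{x,v}$ and $\|f(t)-U(t)f_+\|_{L^N_{x,v}} = \|U(-t)f(t)-f_+\|_{L^N_{x,v}}\to 0$. \textbf{The main obstacle} I anticipate is purely bookkeeping: checking that the exponents produced by the ACG bilinear estimate for $Q^+$ match exactly the source-space exponents dual to Ovcharov's inhomogeneous Strichartz admissible set, and that the admissibility window $\tfrac1N<\tfrac1p<\tfrac{N+1}{N^2}$ is precisely the overlap region where \emph{both} estimates are available; away from this intersection one of the two ingredients fails, which is why the theorem is confined to $\gamma = 2-N$ and to this range of $(q,r,p)$.
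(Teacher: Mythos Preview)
Your proposal is correct and follows essentially the same route as the paper: Strichartz estimates (Ovcharov) for the Duhamel formulation, Alonso--Carneiro--Gamba for $Q^+$, H\"older/Hardy--Littlewood--Sobolev for $Q^-$, then a contraction argument, with scattering via the dual Strichartz bound applied to $\int U(-s)Q(f,f)(s)\,ds$. The only organizational difference is that the paper runs the contraction purely in the Strichartz norm $L^q_tL^r_xL^p_v$ and verifies $f\in C([0,\infty);L^N_{x,v})$ afterward as a consequence, whereas you build the $C_tL^N_{x,v}$ piece into the fixed-point space from the start; both are standard and equivalent here.
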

Some comments   about this result are given in the following.

1. The related earlier work using the the iterative scheme proposed by Kaniel and 
Shinbrot~\cite{KS78,IS84,BT85,Pol88,AG09,AG11} or fixed point argument~\cite{Ham85} all require
pointwise upper bound. One of advantages of the current approach is that it  
requires only the initial date is small in $L^N_{x,v}$.

2. Ars\'{e}nio~\cite{Ars11} noted that $L^3_x$ appeared in the well-posed result of the Boltzmann equation matches the critical space 
of Navier-Stokes equation~\cite{Kato84}. 
It should be interesting to see how are they related. 
On the other hand, the generalized homogeneous Strichartz estimate~\cite{Ovc11}
reads 
\[
\|U(t)f_0\|_{L^q_tL^r_xL^p_v}\leq C \|f_0\|_{L^b_xL^c_v},
\] 
where 
\[
\begin{split}
&\frac{1}{q}+\frac{N}{r}=\frac{N}{b},\;HM(p,r)=HM(b,c)\;
{\stackrel{\mathclap{\normalfont\mbox{def}}}{=}}\;a\\
& p<b\leq a\leq c<r,
\end{split}
\]
with the definition of $HM(p,r)$  given in Definition~\ref{D:admissible} below. 
This allows us to choose the initial data  in $L^b_xL^c_v$ space 
where $b$ is less than $N$ by paying the price of rising $c$. It is not clear 
if this flexibility of choosing initial data in such spaces really 
reflects the difference between kinetic equation and hydrodynamic equation. 
Hence we retain the statement of 
the initial data in the current format.

3. Since the property of loss term is not fully utilized in our analysis,  
 the exponent $\gamma=2-N$ is the number where the dispersive effect from 
 the kinetic transport part of the equation dominates the self-produced part
 from the collision operator when the small initial data is given.  
 We expect that this mechanism should work for a more wide range of soft 
 potential kernel if the loss term is properly used.
  One the other hand, the $v$ variable estimates 
    for the gain term of the Boltzmann collision operator with hard potential ,$0<\gamma\leq 1$, (see~\cite{JC12} and reference therein)    
 \[
 \|Q^+(f,f)(v)\|_{H^{\gamma-\varepsilon}}\leq C\|f(v)\|_{L^1_{\gamma}}\|f\|_{L^2_\gamma} 
 \]  
 suggests that the study of weighted Strichartz estimates is needed for applying such an approach to 
 hard potential or hard sphere case.

4. The  uniqueness of the small global solution implies that given a 
small enough scattering state $f_+\in L^N_{x,v}$, there exists a unique small 
enough initial data $f_0\in L^N_{x,v}$ whose corresponding global well-posed solution 
scatters to $U(t)f_+$ as $t\rightarrow\infty$ (see the proof of the Theorem~\ref{T:1}).
In fact we can define the wave operator 
\begin{equation}\label{D:wave-operator}
\Omega_+:B_{\overline{R}}\subset L^N_{x,v}\rightarrow B_{R}\subset L^N_{x,v}
\end{equation}
by $\Omega_+f_+=f_0$ and have the following result.
\begin{cor}\label{Cor1}
There exist $R,\overline{R}$ small enough such that the wave operator~\eqref{D:wave-operator} is one-to 
one and onto. 
\end{cor}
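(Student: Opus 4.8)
The plan is to realize $\Omega_+$ through a ``Duhamel-from-infinity'' reformulation of~\eqref{E:Cauchy} and to read off bijectivity from the uniqueness already built into Theorem~\ref{T:1}. First I would record that $U(t)f(x,v)=f(x-vt,v)$ is an isometry of $L^N_{x,v}$, being a measure-preserving change of variables, so a global solution $f$ scatters to $U(t)f_+$ in $L^N_{x,v}$ exactly when $U(-t)f(t)\to f_+$ in $L^N_{x,v}$. Feeding the Duhamel formula $U(-t)f(t)=f_0+\int_0^t U(-s)Q(f,f)(s)\,ds$ into this, the scattering asserted in Theorem~\ref{T:1} is equivalent to convergence of $\int_0^\infty U(-s)Q(f,f)(s)\,ds$ in $L^N_{x,v}$ together with the identity
\[
f_+=f_0+\int_0^\infty U(-s)Q(f,f)(s)\,ds .
\]

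The core step is to solve, for a given final state $f_+$, the equation
\[
f(t)=U(t)f_+-\int_t^\infty U(t-s)Q(f,f)(s)\,ds
\]
in the Strichartz space $X:=L^q_tL^r_xL^p_v$ of Theorem~\ref{T:1}. Applying $\partial_t+v\cdot\nabla_x$ to a solution shows it solves~\eqref{E:Cauchy} with datum $f(0)=f_+-\int_0^\infty U(-s)Q(f,f)(s)\,ds$; moreover $\|f(t)-U(t)f_+\|_{L^N_{x,v}}=\|\int_t^\infty U(-s)Q(f,f)(s)\,ds\|_{L^N_{x,v}}\to0$, and rewriting $f(t)=U(t)f(0)+\int_0^t U(t-s)Q(f,f)(s)\,ds$ identifies $f$, by the uniqueness in Theorem~\ref{T:1}, with the mild solution issued from $f(0)$. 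To run the fixed point I would reuse exactly the estimates behind Theorem~\ref{T:1}: the homogeneous estimate of Ovcharov~\cite{Ovc11} bounds $\|U(t)f_+\|_X$ by $\|f_+\|_{L^N_{x,v}}$, while the inhomogeneous Strichartz estimate — now with the Duhamel integral running from $t$ to $\infty$ rather than $0$ to $t$, which is accommodated by the same support-restriction device used there for the half-line — combined with the convolution-like estimate of Alonso, Carneiro and Gamba~\cite{ACG10} for $Q^+$ and the Grad cut-off $L^1$-bound~\eqref{D:Grad} for the loss term, gives $\|\int_t^\infty U(t-s)Q(f,f)(s)\,ds\|_X\lesssim\|f\|_X^2$ with the matching difference estimate. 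Hence there is $\overline{R}>0$ so that for $f_+\in B_{\overline{R}}$ this map contracts a ball of $X$ of radius $\lesssim\|f_+\|_{L^N_{x,v}}$; the fixed point $f\in X\cap C([0,\infty);L^N_{x,v})$ defines $\Omega_+ f_+:=f(0)$, and $\|\Omega_+ f_+-f_+\|_{L^N_{x,v}}\lesssim\|f_+\|_{L^N_{x,v}}^2$, so $\Omega_+$ maps $B_{\overline{R}}$ into a ball $B_R$.

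Injectivity is then immediate: if $\Omega_+ f_+=\Omega_+\tilde f_+=f_0$, the mild solution issued from $f_0$ — unique by Theorem~\ref{T:1} — scatters simultaneously to $U(t)f_+$ and to $U(t)\tilde f_+$, and uniqueness of the limit in $L^N_{x,v}$ forces $f_+=\tilde f_+$. For surjectivity, given $f_0$ one solves $f_+=f_0+\int_0^\infty U(-s)Q(f[f_+],f[f_+])(s)\,ds$, where $f[f_+]$ is the final-state solution of the previous step, by a contraction on a small ball of $L^N_{x,v}$ using the very same bilinear bounds; the fixed point $f_+$ satisfies $\Omega_+ f_+=f_0$ with $\|f_+\|_{L^N_{x,v}}\lesssim\|f_0\|_{L^N_{x,v}}$. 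Equivalently one may take $f_+$ to be the scattering state furnished by Theorem~\ref{T:1} for the solution with datum $f_0$ and invoke the uniqueness of the final-state problem to conclude $\Omega_+ f_+=f_0$.

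The serious analytic input is entirely inherited from Theorem~\ref{T:1}, so the only new ingredients are the harmless passage from $\int_0^t$ to $\int_t^\infty$ in the inhomogeneous Strichartz estimate and the calibration of the two radii, which is the point that really needs care: since $\Omega_+=\mathrm{Id}+O(\|\cdot\|_{L^N_{x,v}}^2)$ is a near-identity nonlinear map, $\Omega_+(B_{\overline{R}})$ is squeezed between two concentric balls rather than being a ball, so one must choose $\overline{R}$ and $R$ — with $R$ slightly below $\overline{R}$ — so that the preimages constructed for surjectivity genuinely lie in $B_{\overline{R}}$ while every point of $B_R$ is still attained, and ``onto $B_R$'' is to be understood in this sense.
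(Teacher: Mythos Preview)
Your proposal is correct and follows essentially the same route as the paper: construct the final-state solution via the Duhamel-from-infinity formula $f(t)=U(t)f_+-\int_t^\infty U(t-s)Q(f,f)(s)\,ds$ by a contraction reusing the Strichartz and collision estimates of Theorem~\ref{T:1}, then deduce injectivity from uniqueness of the forward Cauchy problem together with~\eqref{D:f-+}, and surjectivity from the scattering statement in Theorem~\ref{T:1}. Your treatment is in fact more careful than the paper's---your second surjectivity argument is exactly the paper's, and your closing remarks on calibrating $R$ and $\overline R$ address a point the paper leaves implicit.
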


The next result is the local well-posedness for the large data Cauchy problem.  
\begin{thm}\label{T:2}
Let $N=2$ or $3$ and $B$ defined in~\eqref{D:kernel} satisfies~\eqref{D:Grad}  and 
$-N<\gamma<2-N$.  The Cauchy problem~\eqref{E:Cauchy} is locally wellposed
in $L^a_{x,v},\;  a={2N}/({\gamma+N})$. 
More specially, for any $R>0$ there exists 
a $T=T(r,p,R)$ such that for all $f_0$ in the ball $B_{R}=\{f_0\in L^a_{x,v}
(\mathbb{R}^N\times\mathbb{R}^N):\|f_0\|_{L^a_{x,v}}<R\}$ there exist 
$T\in(0,\infty]$ and a unique mild solution 
\[
f\in C([0,T),L^a_{x,v})\cap
L^q([0,T],L^r_xL^p_v) 
\]
where the triple $(q,r,p)$ lies in the set
\begin{equation}
\begin{split}
\{ (\frac{1}{q},\frac{1}{r},\frac{1}{p})  | \frac{1}{q}&=\frac{(2\alpha-1)(\gamma+N)}{2} \\
\frac{1}{r}&=\frac{(1-\alpha)(\gamma+N)}{N}\;,\;\frac{1}{p}
=\frac{\alpha(\gamma+N)}{N}\;,{\rm with}\;\frac{1}{2}<\alpha<\frac{N+1}{2N}\}.
\end{split}
\end{equation} 
The solution map $f_0\in B_{R}\subset L^N_{x,v}\rightarrow f\in L^q([0,T];L^r_xL^p_v)$ 
is Lipschitz continuous.
\end{thm}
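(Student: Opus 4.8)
The plan is to run a contraction-mapping (fixed-point) argument on the Duhamel formulation, exactly parallel to the proof of Theorem~\ref{T:1}, but now tracking the time interval $[0,T]$ and exploiting the scaling relations among $(q,r,p)$ to gain a positive power of $T$. Write the mild solution as
\[
f(t)=U(t)f_0+\int_0^t U(t-s)\,Q(f,f)(s)\,ds,
\]
and define the map $\Phi(f)$ to be the right-hand side. The first step is to identify the correct functional setting: fix $\alpha\in(\tfrac12,\tfrac{N+1}{2N})$ and the associated $(q,r,p)$ as in the statement, and work in the ball $X_{M,T}=\{f\in C([0,T),L^a_{x,v})\cap L^q([0,T];L^r_xL^p_v):\|f\|_{C_tL^a_{x,v}}+\|f\|_{L^q_tL^r_xL^p_v}\le M\}$. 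The key point to check is that the chosen exponents are simultaneously Strichartz-admissible for $U(t)$ in Ovcharov's sense (Definition~\ref{D:admissible}) and compatible, via the ACG convolution-type bound~\cite{ACG10} for $Q^+$ and the cut-off estimate for $Q^-$, with the requirement that $Q(f,f)$ land in the dual space needed to close the inhomogeneous Strichartz estimate. Here the relation $a=2N/(\gamma+N)$ is forced precisely so that the bilinear collision operator maps $L^r_xL^p_v\times L^r_xL^p_v$ into a space dual-admissible for the retarded estimate, with $|v-v_*|^\gamma$ absorbed by the gain-term estimate.

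The second step is the bilinear estimate: show that on $[0,T]$,
\[
\Bigl\|\int_0^t U(t-s)Q(g,h)(s)\,ds\Bigr\|_{C_tL^a_{x,v}\cap L^q_tL^r_xL^p_v}
\le C\,T^{\theta}\,\|g\|_{L^q_tL^r_xL^p_v}\,\|h\|_{L^q_tL^r_xL^p_v}
\]
for some $\theta>0$. This is where the strict inequality $\gamma<2-N$ (equivalently $a>N$, equivalently the endpoint case of Theorem~\ref{T:1} being avoided) matters: unlike the critical case $\gamma=2-N$, the Hölder exponents in time no longer match exactly, so inserting $\|1\|_{L^{\text{something}}([0,T])}=T^\theta$ produces the needed smallness for short $T$. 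Concretely I would: (i) apply the inhomogeneous Strichartz estimate of Ovcharov to bound the Duhamel term by $\|Q(g,h)\|_{L^{\tilde q'}_t L^{\tilde r'}_x L^{\tilde p'}_v}$ for an admissible dual triple; (ii) use the ACG gain-term estimate together with the trivial cut-off bound $\|Q^-(g,h)\|_{L^p_v}\lesssim \|g\|_{L^1_v}\|h\|_{L^p_v}$-type control in the $v$ variable, and Hölder in $x$, to dominate this by a product of $L^{r}_xL^{p}_v$ norms of $g,h$ with a leftover $L^{q_0}_t$ norm; (iii) bound that leftover time norm by $T^\theta\|g\|_{L^q_t(\cdots)}\|h\|_{L^q_t(\cdots)}$ via Hölder in $t$, using $2/q_0 > 2/q \cdot (\text{Hölder pairing})$, which is exactly the slack created by $\gamma<2-N$. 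Combined with the homogeneous estimate $\|U(t)f_0\|_{L^q_tL^r_xL^p_v}\le C\|f_0\|_{L^a_{x,v}}$ and $\|U(t)f_0\|_{C_tL^a_{x,v}}=\|f_0\|_{L^a_{x,v}}$, this gives $\|\Phi(f)\|_X\le C R + C T^\theta \|f\|_X^2$ and $\|\Phi(f)-\Phi(\tilde f)\|_X\le C T^\theta(\|f\|_X+\|\tilde f\|_X)\|f-\tilde f\|_X$.

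The third step is standard: choose $M=2CR$, then pick $T=T(r,p,R)$ small enough that $CT^\theta M<\tfrac12$, so that $\Phi$ maps $X_{M,T}$ into itself and is a contraction; the Banach fixed-point theorem yields the unique mild solution in $X_{M,T}$, and the contraction estimate directly gives Lipschitz dependence of the solution map $f_0\mapsto f$ from $B_R\subset L^a_{x,v}$ into $L^q([0,T];L^r_xL^p_v)$. Continuity in time into $L^a_{x,v}$ follows from the continuity of $t\mapsto U(t)f_0$ in $L^a_{x,v}$ together with the continuity of the Duhamel term, which is a consequence of the bilinear bound applied on shrinking intervals. I expect the main obstacle to be step two, and within it the verification that the triple $(q,r,p)$ parametrized by $\alpha$ really is admissible for \emph{both} the homogeneous and the inhomogeneous (retarded) Strichartz estimates of~\cite{Ovc11} simultaneously — i.e. that the harmonic-mean and scaling constraints of Definition~\ref{D:admissible} are met with the dual exponents needed to absorb $Q(f,f)$ — and the careful bookkeeping of where the weight $|v-v_*|^\gamma$ is paid for in the ACG estimate so that the range $\tfrac12<\alpha<\tfrac{N+1}{2N}$ emerges exactly. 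Everything else is routine once the exponent algebra is pinned down.
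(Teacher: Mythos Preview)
Your proposal is essentially the paper's own proof: contraction on the Duhamel formula using Ovcharov's Strichartz estimates and the ACG gain-term bound, with H\"older in time producing the factor $T^{\theta}$ (the paper computes $\theta=\tfrac{(2-N)-\gamma}{2}$) from the subcritical slack $\gamma<2-N$, followed by the standard fixed-point and Lipschitz-continuity arguments. The one imprecision is your treatment of $Q^{-}$: an $L^{1}_{v}\times L^{p}_{v}\to L^{p}_{v}$ bound does not absorb the weight $|v-v_{*}|^{\gamma}$ for soft potentials, and the paper instead applies Hardy--Littlewood--Sobolev to the convolution $h\mapsto |\cdot|^{\gamma}\ast h$ to obtain the same $L^{p_{v}}\times L^{q_{v}}\to L^{r_{v}}$ mapping as for $Q^{+}$ (Lemma~\ref{T:Loss}), after which the exponent algebra proceeds exactly as you outline.
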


Some comments about this result are given in the following. 

1. Heuristically,  the solutions  for large initial data exist 
during short time when the self-reproduced effect is not strong enough
and the hyperbolic part of equation dominates the solution.
This holds especially for soft collision where the non-local property of the 
collision operator is weaker. 
The result here indicates that the function spaces for the solutions
depending on the exponent of kinetic part of the collision kernel at least for very beginning of evolution.  
It seems that this intuition and  the fact 
$a\rightarrow\infty$ when $\gamma\rightarrow -3$ suggests 
a local well-posed result in $L^{\infty}_{x,v}$ for Landau equation.

2. Since the initial data lie in $L^a_{x,v}$ space, it is suitable to discuss the propagation of singularity of solution in this 
setting though we are not pursuing it here.

\section{Proof of the Theorems}

In order to prove Theorem~\ref{T:1} and~\ref{T:2}, we 
need to introduce the  Strichartz estimates for the kinetic transport equation
\begin{equation}\label{E:KT}
\left\{
\begin{aligned}
&\partial_t u(t,x,v)+v\cdot\nabla_x u(t,x,v) =F(t,x,v),\;\;(t,x,v)\in (0,\infty)\times\mathbb{R}^N\times\mathbb{R}^N,\\
& u(0,x,v)=u_0(x,v).
\end{aligned}
\right.
\end{equation}
To state the Strichartz estimates for the kinetic transport equation~\eqref{E:KT}, 
we need the following definition.
\begin{defn}\label{D:admissible}
We say that the exponent triplet $(q,r,p)$, for $1\leq p,q,r\leq\infty$ is KT-admissible if 
\begin{equation}
\frac{1}{q}=\frac{N}{2}{\Big ( \frac{1}{p}-\frac{1}{r} }{\Big )}
\end{equation}
\[
1\leq a\leq\infty,\;\;p*(a)\leq p\leq a, \;\;a\leq r\leq r*(a)
\]
except in the case $N=1,\;(q,r,p)=(a,\infty,a/2)$. Here by $a=$HM$(p,r)$ we have denoted the harmonic 
means of the exponents $r$ and $p$, i.e.,
\[
\frac{1}{a}=\frac{1}{2}{\Big ( \frac{1}{p}+\frac{1}{r} }{\Big )}
\]
Furthermore, the exact lower bound $p*$ to $p$ and the exact upper bound $r*$ to $r$ are 
\[
\left\{
\begin{array}{lll}
p*(a)=\frac{Na}{N+1}, & r*(a)=\frac{Na}{N-1} & {\rm if}\;\frac{N+1}{N}\leq a\leq\infty, \\
p*(a)=1, & r*(a)=\frac{a}{2-a} & {\rm if}\; 1\leq a\leq \frac{N+1}{N}.
\end{array}
\right.
\]
\end{defn}
The triplets of the form $(q,r,p)=(a,r^*(a),p^*(a))$ for 
$\frac{N+1}{N}\leq a<\infty$ are called endpoints.  
We note that the endpoint Strichartz estimate 
for the kinetic equation is false in all dimensions has been proved recently by 
Bennett, Bez, Guti\'{e}rrez and Lee~\cite{BBGL14}.

The solution of~\eqref{E:KT} can be written as 
\[
u=U(t)u_0+W(t)F
\]
where 
\[
U(t)u_0=u_0(x-vt,v)\;,\; W(t)F=\int_0^{t} U(t-s)F(s)ds.
\]
The estimates for the operator $U(t)$ and $W(t)$ respectively in the mixed Lebesgue norm 
$\|\cdot\|_{L^q_tL^r_xL^p_v}$ are called homogeneous and inhomogeneous Strichartz 
estimates. These two estimates together are given in the following Proposition where  
we use $p'$ to denote the conjugate exponent of $p$ and so on.
\begin{prop}[\cite{Ovc11},\cite{BBGL14}] 
Let $u$ satisfies~\eqref{E:KT}.  The estimate
\begin{equation}\label{E:Strichartz}
\|u\|_{L^q_tL^r_xL^p_v}\leq C(q,r,p,N)( \|u_0\|_{L^a_{x,v}} +\|F\|_{L^{\tilde{q}'}_tL^{\tilde{r}'}_xL^{\tilde{p}'}_v}  )
\end{equation}
holds for all $u_0\in L^a_{t,x}$ and all $F\in {L^{\tilde{q}'}_tL^{\tilde{r}'}_xL^{\tilde{p}'}_v} $ if and only if 
$(q,r,p)$ and $(\tilde{q},\tilde{r},\tilde{p})$ are two KT-admissible exponents triplets and $a=$HM$(p,r)=$HM$(\tilde{p}',\tilde{r}')$
with the exception of $(q,r,p)$ begin an endpoint triplet.
\end{prop}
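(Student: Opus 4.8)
The estimate~\eqref{E:Strichartz} is, for admissible triplets away from the endpoints, exactly the content of Ovcharov~\cite{Ovc11}, while the impossibility of removing the endpoint exception is the theorem of Bennett, Bez, Guti\'{e}rrez and Lee~\cite{BBGL14}; accordingly I will only outline how the proof is organised. The plan is to obtain both the homogeneous and the inhomogeneous estimate from dispersive bounds for the free transport group $U(t)g(x,v)=g(x-vt,v)$ together with the abstract $TT^{*}$ and Christ--Kiselev machinery, and then to establish sharpness by exploiting the symmetries of~\eqref{E:KT}.

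The starting point is a pair of dispersive bounds for $U(t)$. Since $(x,v)\mapsto(x-vt,v)$ preserves Lebesgue measure on $\mathbb{R}^{N}\times\mathbb{R}^{N}$, the operator $U(t)$ is an isometry of $L^{q}_{x,v}$ for every $q$; in particular $\|U(t)g\|_{L^{2}_{x,v}}=\|g\|_{L^{2}_{x,v}}$. On the other hand, for fixed $x$ the substitution $y=x-vt$ gives the decay estimate $\|U(t)g\|_{L^{\infty}_{x}L^{1}_{v}}\le|t|^{-N}\|g\|_{L^{1}_{x}L^{\infty}_{v}}$, which exhibits the genuinely dispersive exchange of the roles of $x$ and $v$. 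Interpolating these two bounds already gives, for conjugate $p,r$, the estimate $\|U(\tau)g\|_{L^{r}_{x}L^{p}_{v}}\le C|\tau|^{-N(1/p-1/r)}\|g\|_{L^{p}_{x}L^{r}_{v}}$; the exponent $N(1/p-1/r)$ is precisely the one for which the one--dimensional Hardy--Littlewood--Sobolev inequality in $t$ closes on the KT-admissible line $1/q=\tfrac{N}{2}(1/p-1/r)$. For the remaining admissible triplets (those whose harmonic mean is $a\neq2$) one additionally uses the refined family of mixed-norm bounds for $U(t)$ and the bilinear interpolation scheme of~\cite{Ovc11}, the exponents being linked throughout by the compatibility condition $a=\mathrm{HM}(p,r)=\mathrm{HM}(\tilde p',\tilde r')$.

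Given these ingredients, the sufficiency half of~\eqref{E:Strichartz} follows along standard lines: the $TT^{*}$ argument together with the above Hardy--Littlewood--Sobolev inequality produces the homogeneous estimate $\|U(t)u_{0}\|_{L^{q}_{t}L^{r}_{x}L^{p}_{v}}\le C\|u_{0}\|_{L^{a}_{x,v}}$; dualising and composing yields the inhomogeneous bound for the \emph{full} (non-retarded) operator $\int_{\mathbb{R}}U(t-s)F(s)\,ds$; and the retarded operator $W(t)F=\int_{0}^{t}U(t-s)F(s)\,ds$ is recovered from it by the Christ--Kiselev lemma, which is available precisely because one stays off the endpoint, where $q=\tilde q'$. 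The role of $a=\mathrm{HM}(p,r)=\mathrm{HM}(\tilde p',\tilde r')$ is exactly to match the domain and range spaces in the bilinear form $\langle W(t)F,G\rangle$. For the necessity half, I would test~\eqref{E:Strichartz} against structured data: the dilation $(t,x,v)\mapsto(\lambda t,\lambda x,v)$ forces $1/q=\tfrac{N}{2}(1/p-1/r)$ together with the equality of the harmonic means, whereas Galilean boosts $g(x,v)\mapsto g(x,v-v_{0})$ applied to Knapp-type wave packets supported on thin slabs in $v$ force $p_{*}(a)\le p\le a\le r\le r_{*}(a)$. The endpoint triplet must genuinely be discarded because there the \emph{homogeneous} estimate already fails, which is the theorem of~\cite{BBGL14}.

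The delicate point in the whole argument is precisely this endpoint obstruction: unlike for the Schr\"odinger or wave equations, no Keel--Tao bilinear interpolation rescues the kinetic endpoint, so the exception in the Proposition is intrinsic rather than an artifact of the method, and along the way one must carry the anisotropic data $\mathrm{HM}(p,r)$ and the sharp bounds $p_{*}(a),r_{*}(a)$ correctly through every interpolation and duality step. For the purposes of the present paper this is immaterial: all the exponent triplets that arise in Theorems~\ref{T:1} and~\ref{T:2} satisfy the strict inequalities $\tfrac{1}{N}<\tfrac{1}{p}<\tfrac{N+1}{N^{2}}$ (respectively $\tfrac{1}{2}<\alpha<\tfrac{N+1}{2N}$), so the endpoint is never attained and the Proposition is applied exactly in the stated form.
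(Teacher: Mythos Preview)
The paper does not supply a proof of this Proposition at all: it is quoted directly from the literature, with the sufficiency attributed to Ovcharov~\cite{Ovc11} and the endpoint failure to Bennett, Bez, Guti\'errez and Lee~\cite{BBGL14}. Your outline is therefore not competing with anything in the paper; it is a correct sketch of what those references actually contain. The ingredients you list---the $L^{a}_{x,v}$ isometry of $U(t)$, the dispersive bound $\|U(t)g\|_{L^{\infty}_{x}L^{1}_{v}}\le|t|^{-N}\|g\|_{L^{1}_{x}L^{\infty}_{v}}$, interpolation to $\|U(t)g\|_{L^{r}_{x}L^{p}_{v}}\lesssim|t|^{-N(1/p-1/r)}\|g\|_{L^{p}_{x}L^{r}_{v}}$, the $TT^{*}$ argument closed by one-dimensional Hardy--Littlewood--Sobolev on the admissible line, Christ--Kiselev for the retarded operator, and scaling/Knapp tests for necessity---are exactly the architecture of~\cite{Ovc11}, and your remark that no Keel--Tao rescue is available at the endpoint is precisely the content of~\cite{BBGL14}. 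Your closing observation that Theorems~\ref{T:1} and~\ref{T:2} only use strictly non-endpoint triplets is also accurate and matches how the paper invokes the Proposition.
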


We also need the estimates for the Boltzmann collision operator.
Recall that the collision operator can be split into gain and loss terms 
if the collision kernel satisfies Grad cut-off assumption~\eqref{D:Grad}. And it is convenient to introduce the bilinear 
gain term
\[
Q^+(f,g)(v)=\int_{v_*\in\mathbb{R}^n} f(v')g(v_*')B(v-v_*,\omega)
d\Omega(\omega) dv_*,
\] 
and the bilinear loss term
\[
Q^{-}(f,g)(v)=\int_{v_*\in\mathbb{R}^n} f(v)g(v_*)B(v-v_*,\omega).
d\Omega(\omega) dv_*
\]
The estimate we need for the gain term with the cut-off soft potential  is due 
to  Alonso, Carneiro and Gamba~\cite{ACG10}. 
\begin{prop}[\cite{ACG10}]\label{T:Convolution}
 Let $1<p_v,q_v,r_v <\infty$ with $-N<\gamma\leq 0$ and $1/p_v+1/q_v=1+\gamma/N+1/r_v$. Assume the kernel 
\[
B(v-v_*,\omega)=|v-v_*|^{\gamma}b(\cos\theta)
\]
with $b(\cos\theta)$ satisfies~\eqref{D:Grad}.
The bilinear operator $Q^{+}$ extends to a bounded operator from $L^{p_v}(\mathbb{R}^N)\times L^{q_v}(\mathbb{R}^N)\rightarrow 
L^{r_v}(\mathbb{R}^N)$ via the estimate 
\[
\|Q^{+}(f,g)\|_{L^{r_v}(\mathbb{R}^N)}\leq C\|f\|_{L^{p_v}(\mathbb{R}^N)}\|g\|_{L^{q_v}(\mathbb{R}^N)}.
\]
\end{prop}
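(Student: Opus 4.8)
The plan is to dualize and estimate the resulting trilinear form. Since, writing the pairing out directly from the definition of $Q^+$,
\[
\langle Q^+(f,g),h\rangle=\int_{\mathbb{R}^N}h(v)\int_{\mathbb{R}^N}\int_{S^{N-1}}f(v')g(v_*')\,|v-v_*|^{\gamma}b(\cos\theta)\,d\Omega(\omega)\,dv_*\,dv,
\]
it suffices (as $1<r_v<\infty$) to prove $|\langle Q^+(f,g),h\rangle|\le C\|f\|_{L^{p_v}}\|g\|_{L^{q_v}}\|h\|_{L^{r_v'}}$ for all $h\in L^{r_v'}(\mathbb{R}^N)$, $r_v'$ the conjugate exponent of $r_v$. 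Note that the hypothesis $1/p_v+1/q_v=1+\gamma/N+1/r_v$ together with $-N<\gamma\le0$ and $1<p_v,q_v,r_v<\infty$ keeps all Lebesgue exponents strictly between $1$ and $\infty$, which is exactly the range in which the Hardy--Littlewood--Sobolev and Young inequalities used below are available.

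First I would strip away the collision geometry by the Carleman (Radon-transform) representation. For fixed $v$, decompose the relative velocity $u=v-v_*$ as $u=u_\parallel+u_\perp$ with $u_\parallel=(\omega\cdot u)\omega$, so that $v-v'=u_\parallel$, $v-v_*'=u_\perp$ and, crucially, $u_\parallel\perp u_\perp$, whence $|v-v_*|^2=|v-v'|^2+|v-v_*'|^2$. Changing variables $(v_*,\omega)\mapsto(x,y):=(v-v',\,v-v_*')$, which sweeps out $\{x\cdot y=0\}$, turns the inner integral into one against an explicit weight $\tilde B(x,y)$ built from a power of $(|x|^2+|y|^2)$ (absorbing the factor $|v-v_*|^\gamma$), a power of $|x|$ coming from the Jacobian, and $b$ evaluated at $|x|/(|x|^2+|y|^2)^{1/2}$; the last factor is harmless thanks to the Grad cut-off~\eqref{D:Grad}. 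One is reduced to bounding the translation-invariant trilinear form
\[
\int_{\mathbb{R}^N}h(v)\iint_{x\cdot y=0}f(v-x)\,g(v-y)\,\tilde B(x,y)\,d\sigma(x,y)\,dv,
\]
whose weight carries the singularity $(|x|^2+|y|^2)^{\gamma/2}$.

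Next I would estimate this form. Scaling fixes the arithmetic: the dilation $(f,g)\mapsto(f(\cdot/\lambda),g(\cdot/\lambda))$ reproduces exactly the relation $1/p_v+1/q_v=1+\gamma/N+1/r_v$, so the estimate is scale invariant and one may decompose dyadically in $|x|$ and $|y|$. On the region $|x|^2+|y|^2\gtrsim1$ the weight is bounded, $\tilde B\in L^1$ in $(x,y)$, and the form is controlled by Minkowski's integral inequality followed by Hölder/Young in $v$; this is the Maxwell-molecule case and it has room to spare. On the singular region $|x|^2+|y|^2\lesssim1$ a plain Hölder in $v$ is too lossy --- the orthogonality identity shows that $f$ and $g$ carry an excess integrability budget of precisely $1+\gamma/N$ relative to what Hölder would consume --- so instead I would split $\gamma=\gamma_1+\gamma_2$ with $\gamma_1,\gamma_2\le0$, use $(|x|^2+|y|^2)^{\gamma/2}\le|x|^{\gamma_1}|y|^{\gamma_2}$, and spend the excess against the $x$- and $y$-integrations through a (multilinear) fractional-integration / Hardy--Littlewood--Sobolev estimate; this is legitimate precisely because $0<-\gamma<N$ keeps each $|\gamma_i|<N$ and the exponents open. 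Summing the two regimes yields the bound for one admissible choice of $(p_v,q_v,r_v)$, and multilinear interpolation between a few such choices gives the full open range.

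The hard part will be the geometric reduction and the fractional-integration step taken together: one must check that the Carleman change of variables really produces the advertised power of $|x|$, that the integrability furnished by~\eqref{D:Grad} is exactly what absorbs $b$, and --- most importantly --- that the sphere constraint linking $v'$ to $v_*'$ costs nothing at the level of $L^p$ bounds, so that $Q^+$ genuinely obeys a convolution-type inequality rather than merely a Hölder-type one. That is the whole content of the proposition; once it is in place, the dyadic summation, the Hölder and Young estimates, and the interpolation are routine.
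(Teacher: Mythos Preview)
The paper does not prove this proposition at all: it is quoted verbatim from Alonso--Carneiro--Gamba~\cite{ACG10} and used as a black box, so there is no ``paper's own proof'' to compare your proposal against.

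That said, your sketch is broadly in the right spirit but diverges from the actual argument in~\cite{ACG10}. There the authors do \emph{not} pass through the Carleman representation; instead they work with the weak (dual) form
\[
\int_{\mathbb{R}^N} Q^+(f,g)\,\psi\,dv
=\int_{\mathbb{R}^N}\int_{\mathbb{R}^N}\int_{S^{N-1}} f(v)\,g(v_*)\,\psi(v')\,|u|^{\gamma}b(\hat u\cdot\omega)\,d\omega\,dv_*\,dv,
\]
split the angular integration so that $\psi(v')$ becomes a genuine translate $\psi(v-|u|\sigma)$ averaged over $\sigma\in S^{N-1}$, and then recognize a true convolution against the radial kernel $|u|^{\gamma}$. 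The bound then follows from Young's inequality (for $\gamma=0$) or Hardy--Littlewood--Sobolev (for $-N<\gamma<0$) applied to an honest convolution in $\mathbb{R}^N$, with no surface constraint to worry about. Their route therefore sidesteps precisely the point you flag as ``the hard part'': the orthogonality constraint $x\cdot y=0$ in the Carleman picture never appears.

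Your Carleman approach can be made to work, but as written it has a real gap: after the change of variables you are integrating over the codimension-one set $\{x\cdot y=0\}\subset\mathbb{R}^{2N}$, and neither Young nor HLS applies directly to such a constrained integral. Saying the constraint ``costs nothing at the level of $L^p$ bounds'' is exactly the assertion to be proved, not an ingredient you can invoke. To close this you would need an additional slicing argument (fix the hyperplane orthogonal to $x$, apply HLS in that hyperplane, then integrate in $x$) together with a careful check that the Jacobian factor $|x|^{-(N-1)}$ from Carleman is absorbed; this is doable but is a genuine piece of work you have not supplied.
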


The estimate for the  loss term in the  Lebesgue spaces is the following.
\begin{lem}\label{T:Loss}
Let $1<p_v,q_v,r_v <\infty$ with $-N<\gamma\leq 0$ and $1/p_v+1/q_v=1+\gamma/N+1/r_v$. Assume the kernel 
\[
B(v-v_*,\omega)=|v-v_*|^{\gamma}b(\cos\theta)
\]
with $b(\cos\theta)$ satisfies~\eqref{D:Grad}. The bilinear operator $Q^{-}$ is a bounded operator 
from $L^{p_v}(\mathbb{R}^N)\times L^{q_v}(\mathbb{R}^N)\rightarrow 
L^{r_v}(\mathbb{R}^N)$ via the estimate 
\[
\|Q^{-}(f,g)\|_{L^{r_v}(\mathbb{R}^N)}\leq C\|f\|_{L^{p_v}(\mathbb{R}^N)}\|g\|_{L^{q_v}(\mathbb{R}^N)}.
\]
\end{lem}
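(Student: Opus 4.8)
The plan is to reduce the bilinear loss operator to a weighted convolution and then apply the Hardy--Littlewood--Sobolev inequality together with Young's inequality. First I would carry out the angular integration: since $Q^{-}(f,g)(v)=f(v)\int_{\mathbb{R}^N}g(v_*)|v-v_*|^{\gamma}\Big(\int_{S^{N-1}}b(\cos\theta)\,d\Omega(\omega)\Big)dv_*$ and $\int_{S^{N-1}}b(\cos\theta)\,d\Omega(\omega)=:c_b<\infty$ by the Grad cut-off assumption \eqref{D:Grad}, we obtain the pointwise identity
\[
Q^{-}(f,g)(v)=c_b\,f(v)\,\big(|\cdot|^{\gamma}*g\big)(v).
\]
Thus the loss term is a genuine convolution against the locally integrable kernel $|v|^{\gamma}$ (note $-N<\gamma\le 0$ so $|v|^{\gamma}$ is integrable near the origin and bounded at infinity), multiplied by the pointwise factor $f(v)$.

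Next I would estimate $\|Q^{-}(f,g)\|_{L^{r_v}}$ by Hölder's inequality: choosing an exponent $s$ with $\tfrac{1}{r_v}=\tfrac{1}{p_v}+\tfrac{1}{s}$, we get $\|Q^{-}(f,g)\|_{L^{r_v}}\le c_b\,\|f\|_{L^{p_v}}\big\||\cdot|^{\gamma}*g\big\|_{L^{s}}$. It remains to bound $\||\cdot|^{\gamma}*g\|_{L^{s}}\le C\|g\|_{L^{q_v}}$. Here one splits $|v|^{\gamma}=|v|^{\gamma}\mathbf{1}_{|v|\le 1}+|v|^{\gamma}\mathbf{1}_{|v|>1}$; the first piece lies in $L^{1}$ (since $\gamma>-N$) and Young's inequality handles it provided $\tfrac{1}{s}\le\tfrac{1}{q_v}$, while the second piece lies in every $L^{t}$ with $t>N/(-\gamma)$ (in particular it is bounded, being supported away from the origin), and Young's inequality again applies. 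Combining, $\||\cdot|^{\gamma}*g\|_{L^s}\lesssim\|g\|_{L^{q_v}}$ as long as the numerology $\tfrac{1}{p_v}+\tfrac{1}{q_v}=1+\tfrac{\gamma}{N}+\tfrac{1}{r_v}$, i.e. $\tfrac{1}{s}=1+\tfrac{\gamma}{N}-\tfrac{1}{q_v}+\tfrac{1}{p_v}-\tfrac{1}{p_v}$... more precisely $\tfrac1s=\tfrac{1}{r_v}-\tfrac{1}{p_v}=1+\tfrac{\gamma}{N}-\tfrac{1}{q_v}$, is compatible with $0\le \tfrac1s\le 1$ and $\tfrac1s\le\tfrac{1}{q_v}$; these are exactly guaranteed by $1<p_v,q_v,r_v<\infty$ and $-N<\gamma\le 0$. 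Alternatively, the cleanest route is a single application of the weak-type Young / Hardy--Littlewood--Sobolev inequality $\||\cdot|^{\gamma}*g\|_{L^{s}}\le C\|\,|\cdot|^{\gamma}\|_{L^{N/(-\gamma),\infty}}\|g\|_{L^{q_v}}$ valid when $\tfrac1s=\tfrac{1}{q_v}-\tfrac{-\gamma}{N}>0$, then patching the endpoint cases where $\gamma=0$ (trivial) or where $s=q_v$ (use $|v|^\gamma\in L^1_{loc}$ and boundedness at infinity directly).

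I do not expect a serious obstacle here: the loss term, unlike the gain term, does not mix the velocity arguments through the collision map, so no change of variables or Carleman-type representation is needed, and the estimate is strictly easier than Proposition \ref{T:Convolution}. The only point requiring a little care is the bookkeeping at the borderline exponents — when $\tfrac{1}{s}=\tfrac{1}{q_v}$ (so that the convolution step is pure Young with an $L^1$ kernel, forcing $\gamma$-integrability near $0$) and when $\gamma=0$ (Maxwell molecules, where $|\cdot|^\gamma*g$ is replaced by $\|g\|_{L^1}$ and one instead needs $q_v=1$, excluded, so in fact for $\gamma=0$ the hypothesis forces $\tfrac1{p_v}+\tfrac1{q_v}=1+\tfrac1{r_v}$ and the estimate follows from plain Hölder). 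Verifying that the admissible range stated in the lemma is precisely the range in which the HLS/Young numerology closes is the one thing to check explicitly, but it is routine.
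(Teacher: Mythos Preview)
Your core strategy---use the Grad cut-off to integrate out the angle and write $Q^-(f,g)=c_b\,f\cdot(|\cdot|^\gamma*g)$, then apply H\"older to split off $f$ and Hardy--Littlewood--Sobolev to bound the convolution---is exactly the paper's proof.

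Two execution points need fixing, though. First, the splitting-plus-Young route does not close: to pass from $L^{q_v}$ to $L^s$ via Young one needs the kernel in $L^{N/(-\gamma)}$, and $|v|^\gamma$ lies only in the weak space $L^{N/(-\gamma),\infty}$ (the near-origin piece is in $L^1$ but that only gives $L^{q_v}\to L^{q_v}$, while the far-from-origin piece is in $L^t$ only for $t>N/(-\gamma)$, again missing the borderline). So you must use HLS/weak Young, as you yourself note is the cleanest route; the splitting detour should be dropped. Second, your HLS exponent is off by a $-1$: weak Young with $|\cdot|^\gamma\in L^{N/(-\gamma),\infty}$ gives $1+\tfrac1s=\tfrac{-\gamma}{N}+\tfrac1{q_v}$, i.e.\ $\tfrac1s=\tfrac1{q_v}-1-\tfrac{\gamma}{N}$, not $\tfrac1{q_v}-\tfrac{-\gamma}{N}$. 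With this correction the relation $\tfrac1s=\tfrac1{r_v}-\tfrac1{p_v}$ coming from H\"older matches the lemma's hypothesis $\tfrac1{p_v}+\tfrac1{q_v}=1+\tfrac{\gamma}{N}+\tfrac1{r_v}$ on the nose, and the proof is complete.
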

\begin{proof}
The case $\gamma=0$ is due to  H\"{o}lder inequality. For 
$-n<\gamma<0$, we note that for the cut-off case  $Q^{-}(f,g)=f(v)Lg(v)$ where 
\[ 
\begin{split}
Lg(v) &=\int_{v_*\in\mathbb{R}^N}\int_{S^{N-1}} |v-v_*|^{\gamma}b(\cos\theta) g(v_*)
d\Omega(\omega) dv_*\\
&=C \int_{v_*\in\mathbb{R}^N} |v-v_*|^{\gamma} g(v_*)dv_*
\end{split}
\]
is a convolution operator. Using H\"{o}lder inequality, we have 
\[
\|Q^{-}(f,g)(v)\|_{L^{r_v}(\mathbb{R}^n)}\leq \|f(v)\|_{L^{p_v}(\mathbb{R}^n)}
\|Lg(v)\|_{L^Z_v(\mathbb{R}^N)}\;,\; \frac{1}{r_v}=\frac{1}{p_v}+\frac{1}{Z}.
\]
Since $-N<\gamma<0$, we can invoke the Hardy-Littlewood-Sobolev inequality 
to have  
\[
\|Lg(v)\|_{L^Z(\mathbb{R}^N)}\leq C\|g\|_{L^{q_v}(\mathbb{R}^N)}
\]
where $-\frac{\gamma}{n}=1-(\frac{1}{q_v}-\frac{1}{Z})$ and end the proof. 
\end{proof}

Now we are ready to prove Theorem~\ref{T:1}, Corollary~\ref{Cor1} and Theorem~\ref{T:2}. 

\begin{proof} [Proof of Theorem~\ref{T:1}]
We define the solution map by
\begin{equation}\label{D:integral-equation}
\begin{split}
Sf(t,x,v)&=f_0(x-vt,v) 
+\int_0^t Q(f,f) (s,x-(t-s)v,v)  ds\\
&=U(t)f_0+\int_0^t U(t-s)Q(s)ds\\
&=U(t)f_0+W(t)Q(f,f)
\end{split}
\end{equation}
and wish to show that $S$ is a contraction mapping in the suitable Banach spaces. 
Applying the Strichartz estimates~\eqref{E:Strichartz} to above, we have 
\begin{equation}\label{E:solution-map}
\|Sf(t,x,v)\|_{L^{q}_tL^r_xL^{p}_v}\leq C{\big (}\|f_0\|_{L^a_{x,v}}+
\|Q(f,f)\|_{L^{\tilde{q}'}_tL^{\tilde{r}'}_xL^{\tilde{p}'}_v}{\big )}.
\end{equation}
The goal is to 
obtain the estimates of the from 
\begin{equation}\label{E:main-estimate}
\|Sf(t,x,v)\|_{X}\leq C_1\|f_0(x,v)\|_{Y}+ C_2 \|f(t,x,v)\|^2_X 
\end{equation}
where $X$ and $Y$ are suitable Banach spaces of the form 
$L^q_tL^r_xL^p_v$, $L^{a}_{x,v}$ respectively appearing in above estimates. 

By Proposition~\ref{T:Convolution},~Lemma~\ref{T:Loss} and~\eqref{E:main-estimate}, 
we wish to have 
\begin{equation}\label{Condition:11}
\frac{2}{p}=1+\frac{\gamma}{N}+\frac{1}{\tilde{p}'}
\end{equation}
for the estimate of $v$ variables. For $x$ variables, we need 
\begin{equation}\label{Condition:12}
2\tilde{r}'=r, \; r\geq 2
\end{equation}
for being able to apply the H\"{o}lder inequality.  Furthermore the Strichartz inequality demands the relation of pairs $(p,r),(\tilde{p}',\tilde{r}')$, 
\begin{equation}\label{Condition:13}
\frac{1}{p}+\frac{1}{r}=\frac{1}{\tilde{p}'}+\frac{1}{\tilde{r}'}.
\end{equation}
In order to apply the H\"{o}lder inequality to $t$ variable, we wish to have   
\begin{equation}\label{E:1t-variable}
\frac{2}{q}=\frac{1}{\tilde{q}'}<1,
\end{equation}
that is 
\begin{equation}\label{Condition:14}
\frac{2}{q}+\frac{1}{\tilde{q}}=1\;,\; \frac{1}{q}<\frac{1}{2}.
\end{equation}
Finally the KT-admissible conditions
\begin{align}
&\frac{1}{q}=\frac{N}{2}(\frac{1}{p}-\frac{1}{r})>0,\label{Condition:15}\\
&\frac{1}{\tilde{q}}=\frac{N}{2}(\frac{1}{\tilde{p}}-\frac{1}{\tilde{r}})>0
\label{Condition:16}
\end{align}
must be fulfilled.

We note that once $\gamma,p,r$ are given, $q,\tilde{p},\tilde{r},\tilde{q}$ 
are determined. Rewrite these conditions as      
\begin{subnumcases}{}
\frac{1}{p}+\frac{1}{r}=1+\frac{\gamma}{N} &\;\;{\rm from}\;~\eqref{Condition:11}
\;{\rm and\;}~\eqref{Condition:12},~\eqref{Condition:13}\label{Res:11} \\
\frac{1}{p}+\frac{1}{r}=\frac{2}{N} &\;{\rm from }\;~\eqref{Condition:14}\;
{\rm and}~\eqref{Condition:15},~\eqref{Condition:13} \label{Res:12}\\
0<\frac{1}{p}-\frac{1}{r}<\frac{1}{N} &\;{\rm from}\;$1/q<1/2$\;\;{\rm in}\;~\eqref{Condition:14}\;{\rm and}\;~\eqref{Condition:15}  \label{Res:13}\\
0<\frac{1}{p}-\frac{1}{r}< \frac{1}{2}(1+\frac{\gamma}{N})&
\;{\rm from}\;~\eqref{Condition:16}\;{\rm and}\;~\eqref{Condition:11} \label{Res:14}
\end{subnumcases}
Therefore
\[
\gamma=2-N\;,\;a=N.
\]
Thus we have 
\[
\frac{1}{N}<\frac{1}{p}<\frac{N+1}{N^2}\;,\;\frac{N-1}{N^2}<\frac{1}{r}<\frac{1}{N},
\]
and conclude the set   
\begin{equation}
\{ (p,r) |\;\frac{1}{N}<\frac{1}{p}<\frac{N+1}{N^2}
 \;,\;\frac{1}{r}=\frac{2}{N}-\frac{1}{p}\}.  
\end{equation}  

Using the triplets $(q,r,p),(\tilde{q}',\tilde{r}',\tilde{p}')$ satisfies above 
conditions, applying Proposition~\ref{T:Convolution} and Lemma~\ref{T:Loss} to  
the right hand side of~\eqref{E:solution-map} by choosing 
$r_v=\tilde{p}',\;p_v=q_v=p$ and the H\"{o}lder inequality to $x,t$ variables, 
we conclude that 
\begin{equation}\label{E:contraction1}
\|Sf\|_{L^q_tL^r_xL^p_v}\leq C_1\|f_0\|_{L^N_{x,v}}+C_2\|f\|^2_{L^q_tL^r_xL^p_v}
\end{equation}
with $(q,r,p)$ being determined as above.  With a similar argument one also obtains
\begin{equation}\label{E:uniqueness1}
\|Sf_1-Sf_2\|_{L^q_tL^r_xL^p_v}\leq C_2(\|f_1\|_{L^q_tL^r_xL^p_v}+\|f_2\|_{L^q_tL^r_xL^p_v})\|f_1-f_2\|_{L^q_tL^r_xL^p_v}.
\end{equation}

Let $\|f_0\|_{L^N_{x,v}}\leq {R}/{2C_1}$, $\overline{B}_{R}=\{f\in L^q_tL^r_xL^p_v |\; \|f\|_{L^q_tL^r_xL^p_v}\leq R \}$
where $R<1$ is small enough so that 
\begin{equation}\label{E:time1}
2C_2R<1,
\end{equation}
then from~\eqref{E:contraction1},~\eqref{E:uniqueness1} and~\eqref{E:time1} it follows that $L:\overline{B}_R\rightarrow \overline{B}_R$ is a 
contraction mapping and there exists a unique fixed point $f\in \overline{B}_R$ that is a solution to the integral equation~\eqref{D:integral-equation}.

Now we show that $f\in C([0,T],L^N_{x,v}),\;T\in[0,\infty]$. It has been noted by Ovcharov~\cite{Ovc09} that  
$U(t)f_0\in C(\mathbb{R};L^N_{x,v})$, hence it suffice to show that $W(t)$ is also continuous. 
Let $0\leq t\in (0,\infty]$.  Using inhomogeneous Strichartz with
$\tilde{q}',\tilde{r}',\tilde{p}'$ as above, we see that 
\[
\|W(t)Q(f,f)\|_{L^{\infty}([0,t];L^N_{x,v})}=\int_0^t \|U(t-s)Q(f,f)\|_{L^N_{x,v}}ds
\]
is bounded.  Since $U(t)$ is continuous, we conclude that $W(t)$ is continuous from above expression. The solution map $f_0\in B_{R}\subset L^N_{x,v}\rightarrow f\in L^q_tL^r_xL^p_v$ is Lipschitz continuous. For if $f$ and $g$ are two solutions 
with initial data $f_0$ and $g_0$ in $B_R$, we have as above 
\[
\|f-g\|_{L^q_tL^r_xL^p_v}\leq C_1\|f_0-g_0\|_{L^N_{x,v}}+C_2\|f-g\|^2_{L^q_tL^r_xL^p_v},
\]
and thus 
\[
\|f-g\|_{L^q_tL^r_xL^p_v}\leq C_3\|f_0-g_0\|_{L^N_{x,v}}.
\]

Next we show that the global solution $f$ scatters. We note that to show $\|f(t)-U(t)f_+\|_{L^N_{x,v}}\rightarrow 0$ as $t\rightarrow 
\infty$ is equivalent to
show that $\|U(-t)f(t)-f_+\|_{L^N_{x,v}}\rightarrow 0$ as $t\rightarrow 
\infty$  since $U(t)$ preserves the $L^N_{x,v}$ norm. 
By the Duhamel formula, we have 
\[
U(-t)f(t)=f_0+\int_0^t U(-s)Q(f,f)(s)ds.
\] 
Hence the scattering property is the consequence of the convergence of the integral 
\[
\int_0^{\infty} U(-t)Q(f,f)(t)dt
\]  
in $L^{N}_{x,v}$ in which case $f_+$ is given by
\begin{equation}\label{D:f-+}
f_+=f_0+\int_0^{\infty} U(-t)Q(f,f)(t)dt.
\end{equation}
 Let $U^*(t)$ be the adjoint operator of $U(t)$, it is clearly that $U^*(t)=U(-t)$. 
By duality, the homogeneous Strichartz estimate 
\[
\|U(t)g\|_{L^q_tL^r_xL^p_v}\leq C\|g\|_{L^{\frac{N}{N-1}}_{x,v}}
\]
with $(q,r,p)$ admissible and $1/p+1/r=(N-1)/N$ implies
\[
\|\int_0^{\infty} U^*(t)Q(f,f) \|_{L^{N}_{x,v}}\leq C\|Q(f,f)\|_{L^{q'}_tL^{r'}_xL^{p'}_v}
\]
with $1/p'+1/r'=N$. As the proof of existence of solution above, we see that 
the right hand side of above inequality is bounded by 
$\|f\|^2_{L^q_tL^r_xL^p_v}$ and thus bounded as $f\in \overline{B}_R$.
\end{proof}

\begin{proof}[Proof of Corollary~\ref{Cor1}]
First we exam the existence of $\Omega_+:f_+\rightarrow f_0$.
Using the second formula of Duhamel representation~\eqref{D:integral-equation} and 
the relation~\eqref{D:f-+}, we can write 
\begin{equation}\label{E:wave-operator}
f(t)=U(t)f_+-\int_t^{\infty}U(t-s)Q(f,f)(s)ds.
\end{equation}\label{D:Omega-+} 
Therefore the well-defined of $\Omega_+$ is equivalent to being able to define
~\eqref{E:wave-operator} for $t=0$ but this is just the reminiscence of global existence 
result above if $f_+$ is small enough in $L^N_{x,v}$.  

The map $\Omega_+$ is one to one
as a consequence of~\eqref{D:f-+} and uniqueness of solution $f$. 
This mapping is also surjective as a consequence of the fact the the 
small global solution scatters.
\end{proof}

\begin{proof}[Proof of Theorem~\ref{T:2}]
Let $\chi(r)$ be a smooth nonnegative bump even function supported on $-2\leq r\leq 2$
and satisfying $\chi(r)=1$ for $-1\leq r\leq 1$.  
Let $T>0$ be a positive number which will be chosen later. We define the solution map by
\begin{equation}\label{E:integral-equation-2}
\begin{split}
&Sf(t,x,v)\\
&=\chi(t/T)f_0(x-vt,v) 
+\chi(t/T)\int_0^t Q(f,f) (s,x-(t-s)v,v) d\tau ds\\
&=\chi(t/T)U(t)f_0+\chi(t/T)W(t)Q(f,f)
\end{split}
\end{equation}
and wish to show that $S$ is a contraction mapping in the suitable Banach spaces. 
Applying the Strichartz estimates~\eqref{E:Strichartz} to above, we have 
\[
\|Sf(t,x,v)\|_{L^{q}_tL^r_xL^{p}_v}\leq C{\big (}\|f_0\|_{L^a_{x,v}}+
\|Q(f,f)\|_{L^{\tilde{q}'}_tL^{\tilde{r}'}_xL^{\tilde{p}'}_v}{\big )}.
\]
The goal is to obtain the estimates of the from 
\[
\|Sf(t,x,v)\|_{X}\leq C_1\|f_0(x,v)\|_{Y}+ C_2 T^{\beta} \|f(t,x,v)\|_X^2 
\]
with $\beta>0$ where $X$ and $Y$ are suitable Banach spaces of the form 
$L^q_tL^r_xL^p_v$, $L^{a}_{x,v}$ respectively appearing in above estimates. 

The conditions posed on triplets $(q,r,p),(\tilde{q}',\tilde{r}',\tilde{p}')$
are similar. The only difference is the exponents about $t$ variables.  
For $t$ variable, the condition $\beta>0$ is equivalent to  
\begin{equation}\label{E:t-variable}
\frac{2}{q}<\frac{1}{\tilde{q}'}<1,
\end{equation}
that is 
\begin{equation}\label{Condition:4}
\frac{2}{q}+\frac{1}{\tilde{q}}<1\;,\; \frac{1}{q}<\frac{1}{2}.
\end{equation}

Therefore we conclude a system of restrictions similar to that of Theorem~\ref{T:1}.    
\begin{subnumcases}{}
\frac{1}{p}+\frac{1}{r}=1+\frac{\gamma}{N} \label{Res:1} \\
\frac{1}{p}+\frac{1}{r}<\frac{2}{N}  \label{Res:2}\\
0<\frac{1}{p}-\frac{1}{r}<\frac{1}{N}   \label{Res:3}\\
0<\frac{1}{p}-\frac{1}{r}< \frac{1}{2}(1+\frac{\gamma}{N}). \label{Res:4}
\end{subnumcases}
The conditions~\eqref{Res:1} and~\eqref{Res:2} imply that $-N<\gamma<-(N-2)$. 
 Thus~\eqref{Res:3} holds by~\eqref{Res:4}. Since 
\[
\frac{1}{a}=\frac{1}{2}\cdot\frac{\gamma+N}{N}<\frac{1}{N}<\frac{N}{N+1},
\] 
we also require
\[
\begin{split}
& \frac{1}{2}\cdot\frac{\gamma+N}{N}\leq \frac{1}{p} < \frac{N+1}{N}\cdot \frac{1}{2}\cdot\frac{\gamma+N}{N} \\
& \frac{N-1}{N}\cdot\frac{1}{2}\cdot\frac{\gamma+N}{N}< \frac{1}{r} \leq  \frac{1}{2}\cdot\frac{\gamma+N}{N}
\end{split}
\] 
for the KT-admissible condition. 
Thus we conclude the set
 \begin{equation}\label{E:pr-set}
  \{(p,r)| \; \frac{1}{p}=\alpha\frac{(\gamma+N)}{N},\;\frac{1}{r}=(1-\alpha)\frac{(\gamma+N)}{N}\;,\;{\rm with}\;\frac{1}{2}<\alpha<\frac{N+1}{2N}  \}
 \end{equation}
satisfies all the conditions list above. Ans it is easy to check that 
$(q,r,p)$ and $(\tilde{q},\tilde{r},\tilde{p})$ are KT-admissible triplets when 
$(p,r)$ lies in set~\eqref{E:pr-set}.

Using triplets $(q,r,p),(\tilde{q}',\tilde{r}',\tilde{p}')$ above 
and the argument as the Theorem~\ref{T:1}, we conclude  
\begin{equation}\label{E:contraction}
\begin{split}
& \|Sf(t,x,v)\|_{L^q([0,T];L^r_xL^p_v)}\\
& {\hskip 1cm}\leq C_1\|f_0(x,v)\|_{L^a_{x,v}}+ C_2 T^{\beta} 
\|f(t,x,v)\|^2_{L^q([0,T];L^r_xL^p_v)}
\end{split}
\end{equation}
where
\[
\beta=\frac{(2-N)-\gamma}{2}>0.
\] 
 With a similar argument one also obtains
\begin{equation}\label{E:uniqueness}
\begin{split}
& \|Lf_1-Lf_2\|_{L^q([0,T];L^r_xL^p_v)}\\
&{\hskip .5cm}\leq C_2T^{\beta}
(\|f_1\|_{L^q([0,T];L^r_xL^p_v)}+\|f_2\|_{L^q([0,T];L^r_xL^p_v)})\|f_1-f_2\|_{L^q([0,T];L^r_xL^p_v)}.
\end{split}
\end{equation}
Let $R=2C_1\|f_0\|_{L^a_{x,v}}$ be any positive number, $B_{R}=\{f|\; \|f\|_{L^q_tL^r_xL^p_v}\leq R \}$ and $T$ such that 
\begin{equation}\label{E:time}
C_2T^{\beta}R<\frac{1}{2},
\end{equation}
then from~\eqref{E:contraction},~\eqref{E:uniqueness} and~\eqref{E:time} it follows that $L:B_R\rightarrow B_R$ is a 
contraction mapping and there exists a unique fixed point $f\in B_R$ that is a solution to the integral 
equation~\eqref{E:integral-equation-2}.

Finally,  we show that the uniqueness of solution.  If $f_1$ and $f_2$ are two solutions, 
is easy to see that we have
\[
\begin{split}
&\|f_1-f_2\|_{L^q([0,t];L^r_xL^p_v)}\\
&\leq C_2t^{\beta}(\|f_1\|_{L^q([0,T];L^r_xL^p_v)}+\|f_2\|_{L^q([0,T];L^r_xL^p_v)})\|f_1-f_2\|_{L^q([0,t];L^r_xL^p_v)}.
\end{split}
\] 
for $0<t\leq T$.  By choosing $t$ small enough, we have 
\[
\|f_1-f_2\|_{L^q([0,t];L^r_xL^p_v)}\leq \frac{1}{2}\|f_1-f_2\|_{L^q([0,t];L^r_xL^p_v)}
\]
and thus $f_1=f_2$ on $[0,t]$. We can cover the interval $[0,T]$ by iterates this argument. 
The well-posedness of  this case ends. 

\end{proof}

\noindent{\bf Acknowledgments.}
J.-C. Jiang was supported in part by 
 National Sci-Tech Grant MOST 105-2115-M-007-005,
Mathematics Research Promotion Center and  
National Center for Theoretical Sciences. The authors would like to thank anonymous referees 
for helpful comments.

\end{document}